\newcommand{\za}{\alpha}
\newcommand{\zb}{\beta}
\newcommand{\zd}{\delta}
\newcommand{\zg}{\gamma}
\DeclareMathOperator{\Ext}{Ext}
\DeclareMathOperator{\Hom}{Hom}
\DeclareMathOperator{\pd}{pd}
\DeclareMathOperator{\id}{id}
\DeclareMathOperator{\add}{add}
\DeclareMathOperator{\Gen}{Gen}
\newtheorem{theorem}{Theorem}[section]
\newtheorem{lemma}[theorem]{Lemma}
\newtheorem{prop}[theorem]{Proposition}
\newtheorem{cor}[theorem]{Corollary}
\theoremstyle{definition}
\newtheorem{mydef}[theorem]{Definition}
\newtheorem{example}[theorem]{Example}
\begin{document}

\thispagestyle{empty}

\title{Auslander Algebras Which Are Tilted}
\author{Stephen Zito\thanks{2010 Mathematics Subject Classification. Primary 16G10; Secondary 16G70.\ Keywords: Auslander algebras; tilted algebras.}}
        
\maketitle

\begin{abstract}
Let $A$ be an Auslander algebra of global dimension equal to $2$.  We provide a necessary and sufficient condition for $A$ to be a tilted algebra.  In particular, $A$ is a tilted algebra if and only if $\pd_{A}(\tau_{A}\Omega_{A}DA)\leq1$.
\end{abstract}

\section{Introduction}
We set the notation for the remainder of this paper. All algebras are assumed to be finite dimensional over an algebraically closed field $k$.  If $\Lambda$ is a $k$-algebra then denote by $\mathop{\text{mod}}\Lambda$ the category of finitely generated right $\Lambda$-modules and by $\mathop{\text{ind}}\Lambda$ a set of representatives of each isomorphism class of indecomposable right $\Lambda$-modules.  Given $M\in\mathop{\text{mod}}\Lambda$, the projective dimension of $M$ in $\mathop{\text{mod}}\Lambda$ is denoted by $\pd_{\Lambda}M$ and its injective dimension by $\id_{\Lambda}M$.  We denote by $\add M$ the smallest additive full subcategory of $\mathop{\text{mod}}\Lambda$ containing $M$, that is, the full subcategory of $\mathop{\text{mod}}\Lambda$ whose objects are the direct sums of direct summands of the module $M$.  We let $\tau_{\Lambda}$ and $\tau^{-1}_{\Lambda}$ be the Auslander-Reiten translations in $\mathop{\text{mod}}\Lambda$.  $D$ will denote the standard duality functor $\Hom_k(-,k)$.  Given a $\Lambda$-module $M$, denote the kernel of a projective cover by $\Omega_{\Lambda} M$.  $\Omega_{\Lambda} M$ is called the syzygy of $M$.  Denote the cokernel of an injective envelope by $\Omega_{\Lambda}^{-1}M$.  $\Omega_{\Lambda}^{-1}M$ is called the cosyzygy of $M$.  Finally,  let $\mathop{\text{gl.dim}}\Lambda$ stand for the global dimension and $\mathop{\text{domdim}}\Lambda$ stand for the dominant dimension of an algebra $\Lambda$ (see Definition $\ref{def3}$).

Let $\Lambda$ be an algebra of finite type and $M_1,M_2,\cdots,M_n$ be a complete set of representatives of the isomorphism classes of indecomposable $\Lambda$-modules.  Then  $A=\text{End}_{\Lambda}(\oplus_{i=1}^nM_i)$ is the Auslander algebra of $\Lambda$.  Auslander in $\cite{AU}$ characterized the algebras which arise this way as algebras of global dimension at most $2$ and dominant dimension at least $2$.
\par
 Let $\mathcal{C}_{\Lambda}$ be the full subcategory of $\mathop{\text{mod}}\Lambda$ consisting of all modules generated and cogenerated by the direct sum of representatives of the isomorphism classes of all indecomposable projective-injective $\Lambda$-modules (see Definition $\ref{Q}$).  When $\mathop{\text{gl.dim}}\Lambda=2$, Crawley-Boevey and Sauter showed in $\cite{CBS}$ that the algebra $\Lambda$ is an Auslander algebra if and only if there exists a tilting $\Lambda$-module $T_{\mathcal{C}}$ in $\mathcal{C}_{\Lambda}$. 
\par
Recent work by Nguyen, Reiten, Todorov, and Zhu in $\cite{NRTZ}$ showed the existence of such a tilting module is equivalent to the dominant dimension being at least $2$ without any condition on the global dimension of $\Lambda$.  They also gave a precise description of such a tilting module.
\par
Tilting theory is one of the main themes in the study of the representation theory of algebras.  Given a tilting module over a hereditary algebra, its endomorphism algebra is called a tilted algebra.  For brevity, we refer the reader to  $\cite{ASS}$ for relevant definitions.  In this paper, we give a necessary and sufficient condition for an Auslander algebra $A$ of $\mathop{\text{gl.dim}}A=2$ to be tilted.  We note that an Auslander algebra $A$ of $\mathop{\text{gl.dim}}A\leq1$ is hereditary and thus tilted. 

\begin{theorem}$\emph{(Theorem~2.2)}.$
Suppose $A$ is an Auslander algebra of $\mathop{\emph{gl.dim}}A=2$.  Then $A$ is a tilted algebra if and only if $\pd_{A}(\tau_{A}\Omega_{A}DA)\leq1$.
\end{theorem}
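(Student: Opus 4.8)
The plan is to decouple the two implications and to use tilting reflexivity, with the Crawley--Boevey--Sauter/NRTZ tilting module as the bridge between ``tilted'' and a hereditarity statement. I would rely on the standard fact that if $T$ is a tilting $A$-module and $B=\End_A(T)$, then $A$ is itself the endomorphism algebra of a tilting $B$-module; hence $A$ is tilted as soon as \emph{some} tilting $A$-module has hereditary endomorphism algebra, and conversely a tilting of $A$ from a hereditary $H$ produces a tilting $A$-module with endomorphism algebra $H$. Since $A$ is Auslander with $\mathop{\text{gl.dim}}A=2$, the cited results furnish a distinguished tilting module $T_{\mathcal{C}}\in\mathcal{C}_A$, and I would take $B=\End_A(T_{\mathcal{C}})$ as the test algebra. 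Throughout I would use the homological reformulation $\pd_A M\le 1\iff\Hom_A(DA,\tau_A M)=0$ (valid for $M$ without projective summands), together with the $\mathop{\text{gl.dim}}A=2$ facts that $\Omega_A DA$ has $\pd\le 1$ and that $\tau_A$ and $\Omega_A$ can be read off the minimal projective presentation of $DA$ via the Nakayama functor $\nu=D\Hom_A(-,A)$.

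For the implication ``tilted $\Rightarrow$ condition'', I would exploit that every tilted algebra is quasi-tilted, hence carries a splitting torsion pair $(\mathcal{X},\mathcal{Y})$ in which every module of $\mathcal{X}$ has $\pd\le 1$ and every module of $\mathcal{Y}$ has $\id\le 1$, with the indecomposable injectives distributed between the two classes and with the connecting sequences linking $\tau_A\mathcal{Y}$ to $\mathcal{X}$. The indecomposable summands of $DA$ lying in $\mathcal{X}$ already have $\pd\le 1$, so their syzygies are projective and contribute nothing to $\tau_A\Omega_A DA$; only the injectives in $\mathcal{Y}$ (those of $\pd=2$) matter. For these I would show, using the connecting component, that $\tau_A$ carries the relevant syzygy into $\mathcal{X}$, so that $\tau_A\Omega_A DA\in\mathcal{X}$ and therefore $\pd_A(\tau_A\Omega_A DA)\le 1$.

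For the converse ``condition $\Rightarrow$ tilted'', I would prove that the hypothesis $\pd_A(\tau_A\Omega_A DA)\le 1$ forces $B=\End_A(T_{\mathcal{C}})$ to be hereditary, whence $A$ is tilted by the reflexivity above. Concretely, hereditarity of $B$ amounts to the vanishing $\Ext^2_B(-,-)=0$, which I would transport back across the tilting functors $\Hom_A(T_{\mathcal{C}},-)$ and $\Ext^1_A(T_{\mathcal{C}},-)$ into a statement in $\mathop{\text{mod}}A$ about the second syzygies of the modules in $\mathcal{C}_A$. The explicit NRTZ description of $T_{\mathcal{C}}$, assembled from $A$ and the projective--injectives, is what lets one identify those syzygies with summands governed by $\Omega_A DA$, and the hypothesis should be exactly what annihilates the obstructing $\Ext^2$.

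The main obstacle is this backward direction: verifying that $\pd_A(\tau_A\Omega_A DA)\le 1$ is \emph{precisely} the vanishing needed for $\End_A(T_{\mathcal{C}})$ to be hereditary, rather than merely of global dimension $\le 2$. This demands careful bookkeeping of syzygies and cosyzygies under $\nu$ and under the tilting equivalence, and control of the translate $\tau_A$ of a $\pd\le 1$ module, which can a priori jump to $\pd=2$. The reformulation of $\pd\le 1$ through $\Hom_A(DA,\tau_A-)$ and the dominant dimension hypothesis $\mathop{\text{domdim}}A\ge 2$ (which yields the copresentation $0\to A\to I^0\to I^1$ by projective--injectives) should be the levers that make this bookkeeping close, but pinning down the exact correspondence between the higher $\Ext$ of $B$ and the single module $\tau_A\Omega_A DA$ is the delicate step.
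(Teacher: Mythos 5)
Your overall architecture is reasonable, but as written there is a genuine gap in the backward direction: the entire content of that implication is the claim that $\pd_{A}(\tau_{A}\Omega_{A}DA)\leq1$ forces $\End_{A}(T_{\mathcal{C}})$ to be hereditary, and you explicitly defer this (``the delicate step'') without supplying an argument. The tilting-reflexivity frame around it is standard and correct, but it carries no weight until that vanishing is established; a referee would read your proposal as a plan rather than a proof. The paper closes exactly this gap by never passing to $\End_{A}(T_{\mathcal{C}})$ at all. Instead it works entirely inside $\mathop{\text{mod}}A$ with the two global-dimension-two characterizations $\pd_{A}Y=2\iff\Hom_{A}(\tau_{A}^{-1}\Omega_{A}^{-1}A,Y)\neq0$ and $\id_{A}X\leq1\iff\Hom_{A}(X,\tau_{A}\Omega_{A}DA)=0$: since $\mathcal{F}(T_{\mathcal{C}})=\mathop{\text{Cogen}}(\tau_{A}\Omega_{A}DA)$ (by the NRTZ description of the cotilting module, whose non-injective part is built from the syzygies $\Omega_{A}I_i$), any $Y\in\mathcal{F}(T_{\mathcal{C}})$ of projective dimension $2$ would propagate a nonzero map into $\tau_{A}\Omega_{A}DA$ and contradict the hypothesis; and any $X\in\mathcal{T}(T_{\mathcal{C}})=\mathop{\text{Gen}}(T_{\mathcal{C}})$ satisfies $\Hom_{A}(X,\tau_{A}T_{\mathcal{C}})=0$ because $\pd_{A}T_{\mathcal{C}}\leq1$, giving $\id_{A}X\leq1$. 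The first statement yields splitting of the torsion pair (Proposition 2.1 of the paper), and the two together feed the sufficient criterion of Proposition 1.7 (from \cite{ZITO2}) to conclude $A$ is tilted. If you want to keep your route through hereditarity of $\End_{A}(T_{\mathcal{C}})$, you would essentially have to reprove that criterion, so the bookkeeping you flag as delicate is unavoidable in your setup and absent from your write-up.

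A secondary point: your forward direction is far more elaborate than necessary, and the step ``$\tau_{A}$ carries the relevant syzygy into $\mathcal{X}$'' via the connecting component is not justified (splitting gives closure of the torsion-free class under $\tau_{A}$, not a mechanism for landing in the $\pd\leq1$ class). The paper's argument is one line: in global dimension $2$ one always has $\id_{A}X=2$ for every indecomposable $X\in\add(\tau_{A}\Omega_{A}DA)$, so quasi-tiltedness of the tilted algebra $A$ forces $\pd_{A}X\leq1$. You should replace your connecting-component sketch with this observation.
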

The motivation for this result was the consideration of the following problem: Given an algebra $\Lambda$ of finite type and $A$ its Auslander algebra, what properties of $\mathop{\text{mod}}\Lambda$ can be deduced from properties of $\mathop{\text{mod}}A$?  If more structure is placed on $A$, such as $A$ being tilted, what can we say about $\Lambda$?  We think it would be an interesting and fruitful line of research. 

\subsection{Properties of the subcategory $\mathcal{C}_{\Lambda}$}
Let $\Lambda$ be an algebra.
\begin{mydef}
\label{Gen/Cogen}
Let $M$ be a $\Lambda$-module.  We define $\mathop{Gen} M$ to be the class of all modules $X$ in $\mathop{\text{mod}}\Lambda$ generated by $M$, that is, the modules $X$ such that there exists an integer $d\geq0$ and an epimorphism $M^d\rightarrow X$ of $\Lambda$-modules.  Here, $M^d$ is the direct sum of $d$ copies of $M$.  Dually, we define $\mathop{Cogen}M$ to be the class of all modules $Y$ in $\mathop{\text{mod}}\Lambda$ cogenerated  by $M$, that is, the modules $Y$ such that there exist an integer $d\geq0$ and a monomorphism $Y\rightarrow M^d$ of $\Lambda$-modules.
\end{mydef}
\begin{mydef}
\label{Q}
Let $\tilde{Q}$ be the direct sum of representatives of the isomorphism classes of all indecomposable projective-injective $\Lambda$-modules.  Let $\mathcal{C}_{\Lambda}:=(\text{Gen}\tilde{Q})\cap(\text{Cogen}\tilde{Q})$ be the full subcategory consisting of all modules generated and cogenerated by $\tilde{Q}$.
\end{mydef}
When $\mathop{\text{gl.dim}}\Lambda=2$, Crawley-Boevey and Sauter showed the following characterization of Auslander algebras.
\begin{lemma}$\emph{\cite[Lemma~1.1]{CBS}}$
\label{CBS}
If $\mathop{\emph{gl.dim}}\Lambda=2$, then $\mathcal{C}_{\Lambda}$ contains a tilting-cotilting module if and only if $\Lambda$ is an Auslander algebra.
\end{lemma}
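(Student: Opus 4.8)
The plan is to reduce the statement to a dominant-dimension condition and then handle the two implications separately. By Auslander's characterization recalled above, an algebra is an Auslander algebra exactly when its global dimension is at most $2$ and its dominant dimension is at least $2$. Since we assume $\mathop{\text{gl.dim}}\Lambda = 2$, the lemma is equivalent to the assertion that $\mathcal{C}_{\Lambda}$ contains a tilting-cotilting module if and only if $\mathop{\text{domdim}}\Lambda \geq 2$. Throughout I will use the reformulation that, writing $0 \to \Lambda \to I^0 \to I^1 \to \cdots$ for the minimal injective coresolution of $\Lambda$, one has $\mathop{\text{domdim}}\Lambda \geq 2$ precisely when $I^0$ and $I^1$ are projective-injective, that is, precisely when $\Lambda \in \operatorname{Cogen}\tilde{Q}$ and $\Omega_{\Lambda}^{-1}\Lambda \in \operatorname{Cogen}\tilde{Q}$.

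\textbf{From an Auslander algebra to a tilting-cotilting module.} Assuming $\mathop{\text{domdim}}\Lambda \geq 2$, I propose the explicit module $T = \tilde{Q} \oplus \Omega_{\Lambda}^{-1}\Lambda$. First, $T \in \mathcal{C}_{\Lambda}$: the surjection $I^0 \to \Omega_{\Lambda}^{-1}\Lambda$ with $I^0 \in \add\tilde{Q}$ shows $\Omega_{\Lambda}^{-1}\Lambda \in \operatorname{Gen}\tilde{Q}$, while the inclusion $\Omega_{\Lambda}^{-1}\Lambda \hookrightarrow I^1 \in \add\tilde{Q}$ shows $\Omega_{\Lambda}^{-1}\Lambda \in \operatorname{Cogen}\tilde{Q}$. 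Next, the sequence $0 \to \Lambda \to I^0 \to \Omega_{\Lambda}^{-1}\Lambda \to 0$ has projective outer terms, so $\pd_{\Lambda}T \leq 1$, and the sequence $0 \to \Omega_{\Lambda}^{-1}\Lambda \to I^1 \to I^2 \to 0$ (available since $\mathop{\text{gl.dim}}\Lambda = 2$) has injective outer terms, so $\id_{\Lambda}T \leq 1$. A short diagram chase on the first sequence, using that $\Lambda$ is projective and $I^0$ is injective, gives $\Ext_{\Lambda}^1(T,T) = 0$; together with that same sequence read as an $\add T$-coresolution of $\Lambda$, this makes $T$ a tilting module. Finally, a rigid module with injective dimension at most one and exactly one indecomposable summand for each simple module is a cotilting module; a tilting module already satisfies the first and third conditions, so the bound $\id_{\Lambda}T \leq 1$ furnishes the second. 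Hence $\mathcal{C}_{\Lambda}$ contains a tilting-cotilting module.

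\textbf{From a tilting-cotilting module to domination.} Conversely, let $T \in \mathcal{C}_{\Lambda}$ be tilting-cotilting and fix an $\add T$-coresolution $0 \to \Lambda \to T^0 \to T^1 \to 0$ of $\Lambda$. I would first observe that every indecomposable projective-injective is a summand of $T$: such a module is an indecomposable projective summand of $\Lambda$, hence an injective submodule of $T^0$, and so splits off $T^0 \in \add T$; thus $\tilde{Q} \in \add T$. Since $T^0 \in \mathcal{C}_{\Lambda} \subseteq \operatorname{Cogen}\tilde{Q}$, the composite $\Lambda \hookrightarrow T^0 \hookrightarrow \tilde{Q}^{a}$ places $\Lambda$ in $\operatorname{Cogen}\tilde{Q}$, so $I^0$ is projective-injective and $\mathop{\text{domdim}}\Lambda \geq 1$. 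It then remains to prove $\Omega_{\Lambda}^{-1}\Lambda \in \operatorname{Cogen}\tilde{Q}$. Embedding $T^0 \hookrightarrow E^0$ and $T^1 \hookrightarrow E^1$ in their injective envelopes (both projective-injective, as $T^0,T^1 \in \operatorname{Cogen}\tilde{Q}$), I extend $T^0 \to T^1 \hookrightarrow E^1$ to a map $\tilde d \colon E^0 \to E^1$. Because $T^0$ is essential in $E^0$ and $\ker\tilde d \cap T^0 = \Lambda$, an essentiality argument confines $\ker\tilde d$ to an essential extension of $\Lambda$ lying inside the injective envelope $I^0$ of $\Lambda$; restricting $\tilde d$ to $I^0$ then yields $g \colon I^0 \to E^1$ with $\Lambda \subseteq \ker g \subseteq I^0$ and $\Lambda$ essential in $\ker g$. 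If $\ker g = \Lambda$, the induced map $\Omega_{\Lambda}^{-1}\Lambda \to E^1$ is a monomorphism into a projective-injective, giving $\Omega_{\Lambda}^{-1}\Lambda \in \operatorname{Cogen}\tilde{Q}$ and hence $\mathop{\text{domdim}}\Lambda \geq 2$.

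\textbf{The main obstacle.} The delicate point is exactly the equality $\ker g = \Lambda$, equivalently the vanishing of $\Ext_{\Lambda}^1(\Omega_{\Lambda}^{-1}\Lambda, T)$, that is, the statement that $\Omega_{\Lambda}^{-1}\Lambda$ lies in the torsion-free class $\operatorname{Cogen}T$ of the cotilting torsion pair. From $\pd_{\Lambda}(\Omega_{\Lambda}^{-1}\Lambda) \leq 1$ and the injectivity of $I^0$ one readily obtains the complementary vanishing $\Ext_{\Lambda}^1(T, \Omega_{\Lambda}^{-1}\Lambda) = 0$, so $\Omega_{\Lambda}^{-1}\Lambda$ already lies in the tilting torsion class $\operatorname{Gen}T$; the remaining task is to show that its cotilting-torsion submodule is zero, which I would attempt by checking that each simple in $\operatorname{soc}\Omega_{\Lambda}^{-1}\Lambda$ has a projective-injective injective envelope. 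This is where the three hypotheses must be combined simultaneously — that $T$ is cotilting, that every summand of $T$ lies in $\mathcal{C}_{\Lambda} \subseteq \operatorname{Cogen}\tilde{Q}$, and that $\mathop{\text{gl.dim}}\Lambda = 2$. That membership in $\mathcal{C}_{\Lambda}$ is indispensable is already visible for $\Lambda = kA_2$, where the tilting-cotilting module $P_1 \oplus S_1$ exists but its summand $S_1 = \Omega_{\Lambda}^{-1}\Lambda$ fails to lie in $\operatorname{Cogen}\tilde{Q}$, consistently with $\mathop{\text{domdim}}\Lambda = 1$.
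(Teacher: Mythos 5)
The paper itself does not prove this lemma---it is quoted from \cite{CBS}---so your attempt must stand on its own. Your forward direction does: the module $T=\tilde{Q}\oplus\Omega_{\Lambda}^{-1}\Lambda$ is exactly the tilting module described in Theorem \ref{DomDim}(2), and each verification is sound---membership in $\mathcal{C}_{\Lambda}$, $\pd_{\Lambda}T\leq1$ from $0\rightarrow\Lambda\rightarrow I^0\rightarrow\Omega_{\Lambda}^{-1}\Lambda\rightarrow0$ with $I^0$ projective, $\id_{\Lambda}T\leq1$ from $\mathop{\text{gl.dim}}\Lambda=2$, rigidity, the $\add T$-coresolution of $\Lambda$, and the summand-counting criterion upgrading a tilting module of injective dimension at most one to a cotilting module.

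The converse, however, has a genuine gap, which you yourself flag: the entire argument funnels into the unproved equality $\ker g=\Lambda$, i.e.\ $\Ext_{\Lambda}^1(\Omega_{\Lambda}^{-1}\Lambda,T)=0$, which by the cotilting property says $\Omega_{\Lambda}^{-1}\Lambda\in\operatorname{Cogen}T$. Note that if you had this vanishing, the whole construction with $\tilde d$ and $g$ would be superfluous: since $T\in\operatorname{Cogen}\tilde{Q}$, one gets $\Omega_{\Lambda}^{-1}\Lambda\in\operatorname{Cogen}T\subseteq\operatorname{Cogen}\tilde{Q}$ immediately, hence $I^1\in\add\tilde{Q}$ and $\mathop{\text{domdim}}\Lambda\geq2$. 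Everything you do prove ($\tilde{Q}\in\add T$, $\mathop{\text{domdim}}\Lambda\geq1$, $\Ext_{\Lambda}^1(T,\Omega_{\Lambda}^{-1}\Lambda)=0$ so that $\Omega_{\Lambda}^{-1}\Lambda\in\Gen T$, and $\Lambda$ essential in $\ker g$) is correct but constitutes only the easy half; the hard half is precisely where the cotilting hypothesis must be deployed, and your closing sketch---checking that each simple in $\operatorname{soc}\Omega_{\Lambda}^{-1}\Lambda$ has a projective-injective injective envelope---is not an attack on it but a restatement of the goal, since the injective envelope of $\Omega_{\Lambda}^{-1}\Lambda$ is $I^1$, so that check is verbatim the assertion $I^1$ is projective. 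What is missing is a real argument that the torsion part of $\Omega_{\Lambda}^{-1}\Lambda$ with respect to the cotilting torsion pair $(\{X:\Hom_{\Lambda}(X,T)=0\},\operatorname{Cogen}T)$ vanishes; this is the step where the hypothesis $\mathop{\text{gl.dim}}\Lambda\leq2$ actually enters in \cite{CBS}, whereas in your write-up the global dimension is never used in the converse at all---a warning sign, since \cite{NRTZ} shows the statement survives without it only via a genuinely harder proof. A minor additional quibble: your $kA_2$ illustration has global dimension $1$, so it sits outside the lemma's hypothesis, though it does correctly show that membership of $T$ in $\mathcal{C}_{\Lambda}$ cannot be dropped.
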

Nguyen, Reiten, Todorov, and Zhu in $\cite{NRTZ}$ showed the existence of such tilting and cotilting modules without any condition on the global dimension of $\Lambda$ and gave a precise description of such tilting and cotilting modules in $\mathcal{C}_{\Lambda}$.  We first recall the definition of the dominant dimension of an algebra.
\begin{mydef}
\label{def3}
Let $\Lambda$ be an algebra and let
\begin{center}
$0\rightarrow\Lambda_{\Lambda}\rightarrow I_0\rightarrow I_1\rightarrow\ I_2\rightarrow\cdots$
\end{center}
be a minimal injective resolution of $\Lambda$.  Then $\mathop{\text{domdim}}\Lambda=n$ if $I_i$ is projective for $0\leq i\leq n-1$ and $I_n$ is not projective.  If all $I_n$ are projective, we say $\text{domdim}\Lambda=\infty$.
\end{mydef}

\begin{theorem}$\emph{\cite[Theorem~1]{NRTZ}}$
\label{DomDim}

Let $\mathcal{C}_{\Lambda}$ be the full subcategory consisting of all modules generated and cogenerated by $\tilde{Q}$
\begin{enumerate}
\item[\emph{(1)}] The following statements are equivalent:
\begin{enumerate}
\item[\emph{(a)}] $\mathop{\emph{domdim}}\Lambda\geq2$
\item[\emph{(b)}] $\mathcal{C}_{\Lambda}$ contains a tilting module $T_{\mathcal{C}}$
\item[\emph{(c)}] $\mathcal{C}_{\Lambda}$ contains a cotilting module $C_{\mathcal{C}}$.
\end{enumerate}
\item[\emph{(2)}] If a tilting module $T_{\mathcal{C}}$ exists in $\mathcal{C}_{\Lambda}$, then $T_{\mathcal{C}}\cong\tilde{Q}\oplus(\bigoplus_i\Omega_{\Lambda}^{-1}P_i)$, where $\Omega_{\Lambda}^{-1}P_i$ is the cosyzygy of $P_i$ and the direct sum is taken over representatives of the isomorphism classes of all indecomposable projective non-injective $\Lambda$-modules $P_i$.
\item[\emph{(3)}] If a cotilting module $C_{\mathcal{C}}$ exists in $\mathcal{C}_{\Lambda}$, then $C_{\mathcal{C}}\cong\tilde{Q}\oplus(\bigoplus_i\Omega_{\Lambda}I_i)$ where $\Omega_{\Lambda}I_i$ is the syzygy of $I_i$ and the direct sum is taken over representatives of the isomorphism classes of all indecomposable injective non-projective $\Lambda$-modules $I_i$.
\end{enumerate}

\end{theorem}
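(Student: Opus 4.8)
The plan is to work throughout with the canonical cotilting module supplied by Theorem~\ref{DomDim}. Since $A$ is an Auslander algebra with $\mathrm{gl.dim}\,A=2$, Auslander's characterization gives $\mathrm{domdim}\,A\ge 2$, so Theorem~\ref{DomDim} applies to $\Lambda=A$ and produces the cotilting module $C_{\mathcal{C}}\cong\tilde{Q}\oplus(\bigoplus_i\Omega_A I_i)$, the sum running over the indecomposable injective non-projective $A$-modules $I_i$. The first observation I would record is that $\Omega_A DA=\bigoplus_i\Omega_A I_i$: writing $DA$ as the direct sum of all indecomposable injectives, the projective-injective summands (those in $\tilde{Q}$) have vanishing syzygy, so only the $\Omega_A I_i$ survive. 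Hence the module in the statement is $\tau_A\Omega_A DA=\bigoplus_i\tau_A\Omega_A I_i$, the Auslander--Reiten translate of the non-projective part of $C_{\mathcal{C}}$. I would also note at once that $\pd_A C_{\mathcal{C}}\le 1$ --- indeed $\pd_A I_i\le 2$ forces $\pd_A\Omega_A I_i\le 1$, and $\tilde{Q}$ is projective --- so that $C_{\mathcal{C}}$, being rigid with the correct number of indecomposable summands, is simultaneously a tilting and a cotilting module. A short computation with the minimal projective presentation of $\Omega_A I_i$ shows moreover that $\id_A\tau_A\Omega_A DA\le 1$ holds unconditionally; thus the content of the theorem is that the extra inequality $\pd_A\tau_A\Omega_A DA\le 1$ is precisely the obstruction to being tilted.

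To recognize tilted algebras I would invoke the standard criterion from tilting theory: $A$ is tilted as soon as it admits a tilting module whose endomorphism algebra is hereditary (if $T$ is a tilting $A$-module with $H=\End_A(T)$ hereditary, then $A\cong\End_H(T^{*})$ for a tilting $H$-module $T^{*}$, so $A$ is tilted). I would treat the two implications by different routes. For the converse I use $C_{\mathcal{C}}$ as the candidate tilting module and aim to prove that the hypothesis $\pd_A\tau_A\Omega_A DA\le 1$ forces $\End_A(C_{\mathcal{C}})$ to be hereditary; for the forward implication I argue directly from the splitting torsion pair carried by any tilted algebra, without reference to $\End_A(C_{\mathcal{C}})$.

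For the forward direction, assuming $A$ tilted, I would use that a tilted algebra carries a splitting torsion pair $(\mathcal{X},\mathcal{Y})$ with $\pd_A Y\le 1$ for $Y\in\mathcal{Y}$ and $\id_A X\le 1$ for $X\in\mathcal{X}$, together with the Auslander--Reiten criterion $\pd_A M\le 1\iff\Hom_A(DA,\tau_A M)=0$. Since each summand $\tau_A\Omega_A I_i$ already has injective dimension at most $1$, the remaining task is to locate it in the torsion-free class $\mathcal{Y}$; I would do this by tracking the syzygy $\Omega_A I_i$ of an injective through the splitting torsion pair and applying $\tau_A$, which yields the vanishing of $\Hom_A(DA,\tau_A(\tau_A\Omega_A I_i))$ and hence $\pd_A\tau_A\Omega_A DA\le 1$.

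The converse is where I expect the real work to lie. Assuming $\pd_A\tau_A\Omega_A DA\le 1$, I must manufacture a hereditary endomorphism algebra. The plan is to pass through the tilting equivalence $\Hom_A(C_{\mathcal{C}},-)$ on the torsion class determined by $C_{\mathcal{C}}$: the simple $\End_A(C_{\mathcal{C}})$-modules correspond to the indecomposable summands of $C_{\mathcal{C}}$, and the lengths of their projective resolutions --- hence $\mathrm{gl.dim}\,\End_A(C_{\mathcal{C}})$ --- are governed by second extension groups over $A$ whose vanishing should be controlled exactly by the projective dimension of the translated syzygies $\tau_A\Omega_A I_i$. The equivalence to establish is
\[
\End_A(C_{\mathcal{C}})\ \text{hereditary}\iff\pd_A\tau_A\Omega_A DA\le 1,
\]
after which the criterion of the second paragraph gives that $A$ is tilted. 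The main obstacle is the bookkeeping of this translation: one must verify that the projective resolutions of the simple $\End_A(C_{\mathcal{C}})$-modules have length at most $1$, which requires controlling how the projective-injective summands $\tilde{Q}$ interact with the summands $\Omega_A I_i$ under $\Hom_A(C_{\mathcal{C}},-)$, and confirming that no degree-two extension over $\End_A(C_{\mathcal{C}})$ survives except those already excluded by the hypothesis. Carefully relating extensions over $\End_A(C_{\mathcal{C}})$ to extensions over $A$ through the tilting equivalence --- and isolating precisely the term measured by $\pd_A\tau_A\Omega_A DA$ --- is the step I expect to be the most delicate.
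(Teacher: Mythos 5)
Your proposal does not prove the statement it was assigned. The statement in question is Theorem~\ref{DomDim} itself --- the result of Nguyen--Reiten--Todorov--Zhu asserting the equivalence of $\mathop{\text{domdim}}\Lambda\geq2$ with the existence of a tilting (resp.\ cotilting) module in $\mathcal{C}_{\Lambda}$, together with the explicit descriptions $T_{\mathcal{C}}\cong\tilde{Q}\oplus(\bigoplus_i\Omega_{\Lambda}^{-1}P_i)$ and $C_{\mathcal{C}}\cong\tilde{Q}\oplus(\bigoplus_i\Omega_{\Lambda}I_i)$. Your very first sentence takes this theorem as given (``the canonical cotilting module supplied by Theorem~\ref{DomDim}'') and then outlines a proof of the paper's main result, Theorem~\ref{Main}. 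With respect to the assigned statement the proposal is therefore circular: it assumes exactly what was to be shown, and nothing in it addresses the actual content --- constructing the candidate module, verifying rigidity and the count of summands, proving that membership in $(\text{Gen}\,\tilde{Q})\cap(\text{Cogen}\,\tilde{Q})$ characterizes $\mathop{\text{domdim}}\Lambda\geq2$, and establishing uniqueness of the tilting and cotilting objects of $\mathcal{C}_{\Lambda}$. (The paper itself offers no proof either --- it imports the result wholesale from \cite{NRTZ} --- but a blind proof attempt was expected to engage with that material, not to quote the theorem.)

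Even read charitably as an attempt at Theorem~\ref{Main}, the sketch contains a step that fails outright: you assert that ``$\id_A\tau_A\Omega_A DA\le 1$ holds unconditionally,'' and your forward direction leans on it (``each summand $\tau_A\Omega_A I_i$ already has injective dimension at most $1$''). The opposite is true. For an indecomposable non-projective module $M$, one has $\id_A\tau_A M\leq1$ if and only if $\Hom_A(M,A)=0$; since $\Omega_A I_i$ embeds in the projective cover of $I_i$, we get $\Hom_A(\Omega_A I_i,A)\neq0$, hence $\id_A\tau_A\Omega_A I_i=2$ (global dimension being $2$). This is precisely the fact the paper exploits for the forward implication: tilted implies quasi-tilted, and $\id=2$ then forces $\pd\leq1$ on $\add(\tau_A\Omega_A DA)$. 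Your converse route is also misdirected: proving $\End_A(C_{\mathcal{C}})$ hereditary is far stronger than what the hypothesis provides and is not how the paper proceeds --- being tilted only requires that $A$ be $\End_H(T)$ for \emph{some} tilting module over \emph{some} hereditary $H$, and there is no reason the specific module $C_{\mathcal{C}}$ should have hereditary endomorphism algebra. The paper instead shows, under $\pd_A(\tau_A\Omega_A DA)\leq1$, that $\pd_A Y\leq1$ for all $Y\in\mathcal{F}(T_{\mathcal{C}})$ and $\id_A X\leq1$ for all $X\in\mathcal{T}(T_{\mathcal{C}})$, deduces splitting from Proposition~\ref{prop2}, and concludes via the criterion of Proposition~\ref{prop1}.
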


\subsection{Tilted Algebras}
The endomorphism algebra of a tilting module over a hereditary algebra is said to be $\emph{tilted}$.
There exist many characterizations of tilted algebras.  One such sufficient characterization of tilted algebras was established by Zito in $\cite{ZITO2}$.  It involves the notion of a splitting torsion pair.
Given a tilting $\Lambda$-module $T$, it is well known that $T$ induces a torsion pair $(\mathcal{T},\mathcal{F})$  
 where $(\text{Gen}T,\mathcal{F}(T))=(\mathcal{T}(T),\text{Cogen}(\tau_{\Lambda} T))$. In particular, $D\Lambda\in\mathcal{T}(T)$.  We refer the reader to $\cite{ASS}$ for more details on torsion pairs.
 \par
 We say a torsion pair $(\mathcal{T},\mathcal{F})$
 is $\it{splitting}$ if every indecomposable $\Lambda$-module belongs to either $\mathcal{T}$ or $\mathcal{F}$.  We have the following characterization of torsion pairs which are splitting.  
  \begin{prop}$\emph{\cite[VI,~Proposition~1.7]{ASS}}$
 \label{split}
 Let $(\mathcal{T},\mathcal{F})$ be a torsion pair in $\mathop{\emph{mod}}\Lambda$.  The following are equivalent:
 \begin{enumerate}
 \item[$\emph{(a)}$] $(\mathcal{T},\mathcal{F})$ is splitting.
 \item[$\emph{(b)}$] If $M\in\mathcal{T}$, then $\tau_{\Lambda}^{-1}M\in\mathcal{T}$.
 \item[$\emph{(c)}$] If $N\in\mathcal{F}$, then $\tau_{\Lambda}N\in\mathcal{F}$.
 \end{enumerate}
 \end{prop}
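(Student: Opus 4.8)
The plan is to route the whole equivalence through the Auslander--Reiten formula, after first recording that the splitting condition is equivalent to a vanishing of extension groups. Since $(\mathcal{T},\mathcal{F})$ is a torsion pair, every module $X$ fits into a canonical exact sequence $0\to tX\to X\to X/tX\to 0$ with $tX\in\mathcal{T}$, $X/tX\in\mathcal{F}$, and $\Hom_{\Lambda}(\mathcal{T},\mathcal{F})=0$. An indecomposable $X$ lies in $\mathcal{T}$ or in $\mathcal{F}$ precisely when this sequence splits, so $(\mathcal{T},\mathcal{F})$ is splitting if and only if every canonical sequence splits. For any extension $0\to T\to X\to F\to 0$ with $T\in\mathcal{T}$ and $F\in\mathcal{F}$ one checks that $tX=T$ (the image of $tX$ in $F$ is a torsion submodule of a torsion-free module, hence zero), so such a sequence is itself a canonical sequence. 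Thus I would first establish the reformulation that $(\mathcal{T},\mathcal{F})$ is splitting if and only if $\Ext^{1}_{\Lambda}(F,T)=0$ for all $F\in\mathcal{F}$ and all $T\in\mathcal{T}$.

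Granting this, I would prove $\mathrm{(a)}\Rightarrow\mathrm{(c)}$ as follows. Because $\mathcal{F}$ is closed under direct summands and direct sums and $\tau_{\Lambda}$ commutes with direct sums, it suffices to treat an indecomposable $N\in\mathcal{F}$. If $N$ is projective then $\tau_{\Lambda}N=0\in\mathcal{F}$. Otherwise the almost split sequence $0\to\tau_{\Lambda}N\to E\to N\to 0$ is a non-split element of $\Ext^{1}_{\Lambda}(N,\tau_{\Lambda}N)$; were $\tau_{\Lambda}N\in\mathcal{T}$, the reformulation would force this group to vanish, a contradiction. Since $\tau_{\Lambda}N$ is indecomposable, splitting then yields $\tau_{\Lambda}N\in\mathcal{F}$. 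The mirror-image argument applied to the almost split sequence $0\to M\to E\to\tau^{-1}_{\Lambda}M\to 0$ of a non-injective indecomposable $M\in\mathcal{T}$ gives $\mathrm{(a)}\Rightarrow\mathrm{(b)}$.

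For the two converses I would invoke the Auslander--Reiten formula in its two guises, $\Ext^{1}_{\Lambda}(X,Y)\cong D\,\overline{\Hom}_{\Lambda}(Y,\tau_{\Lambda}X)$ and $\Ext^{1}_{\Lambda}(X,Y)\cong D\,\underline{\Hom}_{\Lambda}(\tau^{-1}_{\Lambda}Y,X)$. To show $\mathrm{(c)}\Rightarrow\mathrm{(a)}$, fix $F\in\mathcal{F}$ and $T\in\mathcal{T}$; hypothesis (c) gives $\tau_{\Lambda}F\in\mathcal{F}$, so $\Hom_{\Lambda}(T,\tau_{\Lambda}F)=0$, whence $\overline{\Hom}_{\Lambda}(T,\tau_{\Lambda}F)=0$ and therefore $\Ext^{1}_{\Lambda}(F,T)\cong D\,\overline{\Hom}_{\Lambda}(T,\tau_{\Lambda}F)=0$; the reformulation then yields splitting. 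Symmetrically, $\mathrm{(b)}\Rightarrow\mathrm{(a)}$ follows because (b) gives $\tau^{-1}_{\Lambda}T\in\mathcal{T}$, so $\Hom_{\Lambda}(\tau^{-1}_{\Lambda}T,F)=0$ and the second formula yields $\Ext^{1}_{\Lambda}(F,T)=0$. Chaining $\mathrm{(a)}\Rightarrow\mathrm{(b)}\Rightarrow\mathrm{(a)}$ and $\mathrm{(a)}\Rightarrow\mathrm{(c)}\Rightarrow\mathrm{(a)}$ closes all three equivalences.

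The part requiring the most care is the bookkeeping in the Auslander--Reiten formula: one must keep straight which variable carries $\tau_{\Lambda}$ versus $\tau^{-1}_{\Lambda}$, and whether the stable Hom is taken modulo injectives ($\overline{\Hom}$) or modulo projectives ($\underline{\Hom}$), since it is exactly the pairing of the correct variant with the correct hypothesis (c) or (b) that makes the relevant $\Hom$ space vanish. The reformulation of splitting as $\Ext^{1}_{\Lambda}(\mathcal{F},\mathcal{T})=0$ is the conceptual hinge that lets both the almost split sequences and the Auslander--Reiten formula be applied uniformly.
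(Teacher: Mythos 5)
The paper itself offers no proof of this proposition: it is imported verbatim from \cite[VI,~Proposition~1.7]{ASS}, so there is no in-paper argument to compare against. Your proposal is correct and is essentially the standard textbook proof: reformulate splitting as $\Ext^1_{\Lambda}(F,T)=0$ for all $F\in\mathcal{F}$, $T\in\mathcal{T}$; derive (a)$\Rightarrow$(b) and (a)$\Rightarrow$(c) from almost split sequences; and obtain the converses from the two forms of the Auslander--Reiten formula, $\Ext^1_{\Lambda}(X,Y)\cong D\overline{\Hom}_{\Lambda}(Y,\tau_{\Lambda}X)\cong D\underline{\Hom}_{\Lambda}(\tau^{-1}_{\Lambda}Y,X)$, whose variances you pair correctly with hypotheses (c) and (b) respectively (and the passage from $\Hom=0$ to $\overline{\Hom}=0$ or $\underline{\Hom}=0$ is immediate, since the stable groups are quotients).

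One step deserves more care than you give it. In the direction ``splitting $\Rightarrow$ every canonical sequence splits,'' you observe only that an \emph{indecomposable} $X$ lies in $\mathcal{T}$ or $\mathcal{F}$ exactly when its canonical sequence splits; but the middle term of a non-split extension $0\to T\to X\to F\to 0$ (which you correctly identify as canonical via $tX=T$) need not be indecomposable. To close the gap, decompose $X$ into indecomposables, group them as $X=U\oplus V$ with $U\in\mathcal{T}$ and $V\in\mathcal{F}$ (both classes are closed under finite direct sums and summands), and use additivity of the torsion radical: $tX=tU\oplus tV=U\oplus 0$, so the canonical inclusion $tX\hookrightarrow X$ is the inclusion of a direct summand and hence splits, contradicting non-splitness. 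With that routine insertion your argument is complete and matches the cited source's approach.
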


\begin{prop}$\emph{\cite[Proposition~2.1]{ZITO2}}$
\label{prop1}
If there exists a tilting module $T$ in $\mathop{\emph{mod}}\Lambda$ such that the induced torsion pair $(\mathcal{T}(T),\mathcal{F}(T))$ is splitting and $\id_{\Lambda}X\leq1$ for every $X\in\mathcal{T}(T)$, then $\Lambda$ is tilted.
\end{prop}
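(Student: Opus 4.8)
The plan is to show that the hereditary algebra witnessing that $\Lambda$ is tilted can be taken to be $B:=\End_{\Lambda}(T)$ itself. Recall that for a tilting module $T_{\Lambda}$ the bimodule ${}_{B}T_{\Lambda}$ is again tilting on the other side: $T$ is a tilting module over $B^{op}$ and the canonical map identifies $\Lambda$ with the endomorphism algebra of $T$ regarded as a tilting $B^{op}$-module. Since an algebra is hereditary if and only if its opposite is, it therefore suffices to prove that $B$ is hereditary, i.e. $\mathop{\text{gl.dim}}B\leq1$; then $\Lambda\cong\End_{B^{op}}(T)$ exhibits $\Lambda$ as the endomorphism algebra of a tilting module over the hereditary algebra $B^{op}$, which is exactly the assertion that $\Lambda$ is tilted.

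To control the homological algebra of $B$ I would pass to the derived equivalence $\Phi=\mathop{\text{RHom}}_{\Lambda}(T,-)\colon\mathcal{D}^{b}(\mathop{\text{mod}}\Lambda)\to\mathcal{D}^{b}(\mathop{\text{mod}}B)$ afforded by the tilting module (here $\pd_{\Lambda}T\leq1$). Writing $(\mathcal{T}(T),\mathcal{F}(T))$ for the given torsion pair and $(\mathcal{X}(T),\mathcal{Y}(T))$ for the induced torsion pair in $\mathop{\text{mod}}B$, the Brenner--Butler equivalences send $M\in\mathcal{T}(T)$ to $Y_{M}=\Hom_{\Lambda}(T,M)\in\mathcal{Y}(T)$ concentrated in degree $0$, and $N\in\mathcal{F}(T)$ to $X_{N}=\Ext^{1}_{\Lambda}(T,N)\in\mathcal{X}(T)$ concentrated in degree $1$. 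Because every $B$-module is an extension of a module in $\mathcal{Y}(T)$ by one in $\mathcal{X}(T)$, long exact sequences reduce the vanishing of $\Ext^{2}_{B}$ to the four cases where both arguments have the special forms $X_{N}$ and $Y_{M}$. Translating the degree shifts through $\Phi$, these four groups identify with $\Ext^{2}_{\Lambda}(M,M')$, $\Ext^{2}_{\Lambda}(N,N')$, $\Ext^{1}_{\Lambda}(N,M)$, and $\Ext^{3}_{\Lambda}(M,N)$ for $M,M'\in\mathcal{T}(T)$ and $N,N'\in\mathcal{F}(T)$, so I am reduced to showing each of these is zero.

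The first and third are immediate: since $\id_{\Lambda}M'\leq1$ for $M'\in\mathcal{T}(T)$ we get $\Ext^{2}_{\Lambda}(-,M')=0$, and since $(\mathcal{T}(T),\mathcal{F}(T))$ is splitting we have $\Ext^{1}_{\Lambda}(\mathcal{F}(T),\mathcal{T}(T))=0$, killing $\Ext^{1}_{\Lambda}(N,M)$. The remaining two are the crux. The key observation is that a cosyzygy of a torsion-free module is torsion: for $N'\in\mathcal{F}(T)$ the injective envelope lies in $\mathcal{T}(T)$ (all injectives are generated by $T$), hence its quotient $\Omega_{\Lambda}^{-1}N'$ lies in $\mathcal{T}(T)$ because a torsion class is closed under quotients. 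Consequently $\id_{\Lambda}\Omega_{\Lambda}^{-1}N'\leq1$, which forces $\id_{\Lambda}N'\leq2$; running this for every module shows $\mathop{\text{gl.dim}}\Lambda\leq2$ and in particular $\Ext^{3}_{\Lambda}(M,N)=0$. For the last group I would use dimension shifting: $\Ext^{2}_{\Lambda}(N,N')\cong\Ext^{1}_{\Lambda}(N,\Omega_{\Lambda}^{-1}N')$, and the right-hand side vanishes because $N\in\mathcal{F}(T)$, $\Omega_{\Lambda}^{-1}N'\in\mathcal{T}(T)$, and the torsion pair is splitting. This establishes all four vanishings, hence $\mathop{\text{gl.dim}}B\leq1$ and the proposition.

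I expect the main obstacle to be precisely the vanishing of $\Ext^{2}_{\Lambda}(N,N')$ on the torsion-free class: it is the only place where the injective-dimension hypothesis on $\mathcal{T}(T)$ and the splitting hypothesis must be combined, through the fact that the cosyzygy of a torsion-free module re-enters the torsion class. A secondary point requiring care is the bookkeeping of the degree shifts under $\Phi$ (equivalently, the classical connecting isomorphisms of Chapter VI of $\cite{ASS}$ if one prefers to avoid derived categories), which determines exactly which $\Ext^{i}_{\Lambda}$ governs each $\Ext^{2}_{B}$.
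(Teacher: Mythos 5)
Your proof is correct, and a comparison with ``the paper's proof'' is necessarily one-sided: this paper does not prove the proposition at all but imports it from $\cite{ZITO2}$, so your argument stands as a self-contained substitute. Checking it in detail: the reduction of $\mathop{\text{gl.dim}}\End_{\Lambda}(T)\leq1$ to four $\Ext$-vanishings via the torsion pair $(\mathcal{X}(T),\mathcal{Y}(T))$ is sound, since the Brenner--Butler functors are dense onto $\mathcal{Y}(T)$ and $\mathcal{X}(T)$ and every $B$-module is an extension of a module in $\mathcal{Y}(T)$ by one in $\mathcal{X}(T)$; the shift bookkeeping under $\Phi=\mathrm{RHom}_{\Lambda}(T,-)$ is right, giving $\Ext^2_B(Y_M,Y_{M'})\cong\Ext^2_{\Lambda}(M,M')$, $\Ext^2_B(X_N,X_{N'})\cong\Ext^2_{\Lambda}(N,N')$, $\Ext^2_B(X_N,Y_M)\cong\Ext^1_{\Lambda}(N,M)$, and $\Ext^2_B(Y_M,X_N)\cong\Ext^3_{\Lambda}(M,N)$; and the two nontrivial vanishings rest exactly where you locate them. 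Indeed $D\Lambda\in\mathcal{T}(T)$ (as the paper itself remarks), so for $N'\in\mathcal{F}(T)$ the cosyzygy $\Omega_{\Lambda}^{-1}N'$ is a quotient of a module in $\Gen T$ and hence lies in $\mathcal{T}(T)$; this yields both $\mathop{\text{gl.dim}}\Lambda\leq2$ (killing $\Ext^3_{\Lambda}(M,N)$) and, combined with the fact that a splitting torsion pair forces $\Ext^1_{\Lambda}(\mathcal{F}(T),\mathcal{T}(T))=0$ (the middle term of any such extension decomposes as torsion $\oplus$ torsion-free, whence the torsion part equals the torsion submodule and splits off), the isomorphism $\Ext^2_{\Lambda}(N,N')\cong\Ext^1_{\Lambda}(N,\Omega_{\Lambda}^{-1}N')=0$. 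Your route actually buys more than the bare statement: it identifies the hereditary algebra concretely as $B=\End_{\Lambda}(T)$, so $\Lambda$ is tilted ``by the very module $T$ in the hypothesis,'' and it derives $\mathop{\text{gl.dim}}\Lambda\leq2$ as a consequence rather than an assumption; the cost is the derived-category formalism, which, as you say, can be traded for the connecting-sequence isomorphisms of $\cite{ASS}$, Chapter VI. One cosmetic caution: the canonical isomorphism identifies $\Lambda$ with the endomorphism algebra of $T$ viewed as a \emph{left} $B$-module (equivalently, a right $B^{\mathrm{op}}$-module), so an ``op'' intervenes; since an algebra is hereditary if and only if its opposite is, and the class of tilted algebras is closed under this duality (replace $T$ by $DT$), this does not affect the conclusion, but the sides should be stated carefully in a final write-up.
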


\section{Main Result}
We begin with a preliminary result.  When $\mathop{\text{domdim}}\Lambda=2$, Theorem $\ref{DomDim}$ guarantees the existence of a tilting module $T_{\mathcal{C}}$ in $\mathcal{C}_{\Lambda}$.  The next proposition provides a necessary and sufficient condition for the induced torsion pair $(\mathcal{T}(T_{\mathcal{C}}),\mathcal{F}(T_{\mathcal{C}}))$ to be splitting.
\begin{prop}
\label{prop2}
The torsion pair $(\mathcal{T}(T_{\mathcal{C}}),\mathcal{F}(T_{\mathcal{C}}))$ is splitting if and only if $\pd_{\Lambda}X\leq1$ for every $X\in\mathcal{F}(T_{\mathcal{C}})$.
\end{prop}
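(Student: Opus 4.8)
The plan is to reduce the statement to a Hom-vanishing comparison via Proposition \ref{split}, and then to compare two Hom-groups using the explicit shape of $T_{\mathcal{C}}$. Write $T := T_{\mathcal{C}}$, and recall the standard description of the torsion-free class of a tilting torsion pair, $\mathcal{F}(T)=\{Y\in\operatorname{mod}\Lambda : \Hom_{\Lambda}(T,Y)=0\}$. By Proposition \ref{split}(c), the pair $(\mathcal{T}(T),\mathcal{F}(T))$ is splitting if and only if $\tau_{\Lambda}X\in\mathcal{F}(T)$ for every $X\in\mathcal{F}(T)$, that is, if and only if $\Hom_{\Lambda}(T,\tau_{\Lambda}X)=0$ for all $X\in\mathcal{F}(T)$. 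Thus the proposition is equivalent to the assertion that, for all $X\in\mathcal{F}(T)$, one has $\Hom_{\Lambda}(T,\tau_{\Lambda}X)=0$ if and only if $\pd_{\Lambda}X\leq 1$.

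The crux is the following identity, which I would establish via the transpose. For any $X$ with minimal projective presentation $P_1\xrightarrow{p}P_0\to X\to 0$, write $(-)^{*}=\Hom_{\Lambda}(-,\Lambda)$, so that $\tau_{\Lambda}X=D\operatorname{Tr}X$ with $\operatorname{Tr}X=\coker(p^{*})$. The duality $D$ gives $\Hom_{\Lambda}(D\Lambda,\tau_{\Lambda}X)\cong\Hom_{\Lambda^{op}}(\operatorname{Tr}X,\Lambda)=(\operatorname{Tr}X)^{*}$, and dualizing the presentation $P_0^{*}\to P_1^{*}\to\operatorname{Tr}X\to 0$ back identifies $(\operatorname{Tr}X)^{*}$ with $\ker p$, using $P_i^{**}\cong P_i$. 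Since $\ker p=0$ holds exactly when $\pd_{\Lambda}X\leq 1$, I obtain $\pd_{\Lambda}X\leq 1\iff\Hom_{\Lambda}(D\Lambda,\tau_{\Lambda}X)=0$. This replaces the projective-dimension condition by a Hom-vanishing condition, so it remains to compare $\Hom_{\Lambda}(T,\tau_{\Lambda}X)$ with $\Hom_{\Lambda}(D\Lambda,\tau_{\Lambda}X)$.

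For the forward implication I would use that $D\Lambda\in\mathcal{T}(T)=\operatorname{Gen}T$, so there is an epimorphism $T^{d}\twoheadrightarrow D\Lambda$; applying $\Hom_{\Lambda}(-,\tau_{\Lambda}X)$ yields a monomorphism $\Hom_{\Lambda}(D\Lambda,\tau_{\Lambda}X)\hookrightarrow\Hom_{\Lambda}(T,\tau_{\Lambda}X)^{d}$, whence $\Hom_{\Lambda}(T,\tau_{\Lambda}X)=0$ forces $\Hom_{\Lambda}(D\Lambda,\tau_{\Lambda}X)=0$ and so $\pd_{\Lambda}X\leq 1$. For the converse I would invoke Theorem \ref{DomDim}(2) to write $T=\tilde{Q}\oplus\bigoplus_i\Omega_{\Lambda}^{-1}P_i$. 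Assuming $\pd_{\Lambda}X\leq 1$, the key identity gives $\Hom_{\Lambda}(D\Lambda,\tau_{\Lambda}X)=0$, hence $\Hom_{\Lambda}(I,\tau_{\Lambda}X)=0$ for every injective $I$ (each being a summand of a power of the cogenerator $D\Lambda$). In particular $\Hom_{\Lambda}(\tilde{Q},\tau_{\Lambda}X)=0$ since $\tilde{Q}$ is injective, and for each $i$ the injective-envelope sequence $0\to P_i\to E_i\to\Omega_{\Lambda}^{-1}P_i\to 0$ gives, after applying $\Hom_{\Lambda}(-,\tau_{\Lambda}X)$, a monomorphism $\Hom_{\Lambda}(\Omega_{\Lambda}^{-1}P_i,\tau_{\Lambda}X)\hookrightarrow\Hom_{\Lambda}(E_i,\tau_{\Lambda}X)=0$. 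Therefore $\Hom_{\Lambda}(T,\tau_{\Lambda}X)=0$, i.e. $\tau_{\Lambda}X\in\mathcal{F}(T)$, and splitting follows from Proposition \ref{split}.

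I expect the main obstacle to be the key identity $\Hom_{\Lambda}(D\Lambda,\tau_{\Lambda}X)\cong\ker p$, together with the bookkeeping that both $\tilde{Q}$ and the injective envelopes $E_i$ of the $P_i$ are summands of $D\Lambda$; once these are in place, the comparison of the two Hom-groups rests only on $D\Lambda\in\operatorname{Gen}T$ and the explicit decomposition of $T_{\mathcal{C}}$, and the equivalence drops out.
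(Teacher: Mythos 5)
Your proposal is correct and follows essentially the same route as the paper: both reduce splitting to the vanishing of $\Hom_{\Lambda}(T_{\mathcal{C}},\tau_{\Lambda}X)$, pivot on the standard equivalence $\pd_{\Lambda}X\leq1\iff\Hom_{\Lambda}(D\Lambda,\tau_{\Lambda}X)=0$, use $D\Lambda\in\Gen T_{\mathcal{C}}$ for one implication, and use the explicit decomposition $T_{\mathcal{C}}\cong\tilde{Q}\oplus(\bigoplus_i\Omega_{\Lambda}^{-1}P_i)$ together with injectivity for the other. The only cosmetic differences are that you prove the pivot identity from scratch via the transpose (the paper takes it as known) and you reach $\Hom_{\Lambda}(\Omega_{\Lambda}^{-1}P_i,\tau_{\Lambda}X)=0$ through the injective envelope sequence rather than through $\Omega_{\Lambda}^{-1}P_i\in\Gen\tilde{Q}$; both are valid.
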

\begin{proof}
Assume $(\mathcal{T}(T_{\mathcal{C}}),\mathcal{F}(T_{\mathcal{C}}))$ is splitting and let $X\in\mathcal{F}(T_{\mathcal{C}})$.  Then $\tau_{\Lambda}X\in\mathcal{F}(T_{\mathcal{C}})$ by Proposition $\ref{split}$.
Since $\mathcal{F}(T_{\mathcal{C}})=\text{Cogen}(\tau_{\Lambda}T_{\mathcal{C}})$, we have a monomorphism $f:\tau_{\Lambda}X\rightarrow(\tau_{\Lambda}T_{\mathcal{C}})^d$ for some integer $d\geq0$.  If $\text{Hom}_{\Lambda}(D\Lambda,\tau_{\Lambda}X)\neq0$, then, since $D\Lambda\in\mathop{\text{Gen}}(T_{\mathcal{C}})$ and $f$ is a monomorphism, we would have a non-zero map $h:(T_{\mathcal{C}})^{d'}\rightarrow\tau_{\Lambda}T_{\mathcal{C}}$ for some integer $d'\geq0$.  This leads to a contradiction since the Auslander-Reiten formulas guarantee $\Ext_{\Lambda}^1(T_{\mathcal{C}},T_{\mathcal{C}})\cong D\Hom_{\Lambda}(T_{\mathcal{C}},\tau_{\Lambda}T_{\mathcal{C}})=0$.  We conclude $\text{Hom}_{\Lambda}(D\Lambda,\tau_{\Lambda}X)=0$ and $\pd_{\Lambda}X\leq1$.
\par
Assume $\pd_{\Lambda}X\leq1$ for every $X\in\mathcal{F}(T_{\mathcal{C}})$.  Then $\text{Hom}_{\Lambda}(D\Lambda,\tau_{\Lambda}X)=0$.  We know $T_{\mathcal{C}}\cong\tilde{Q}\oplus(\bigoplus_i\Omega_{\Lambda}^{-1}P_i)$ where $\Omega_{\Lambda}^{-1}P_i$ is the cosyzygy of $P_i$ and the direct sum is taken over representatives of the isomorphism classes of all indecomposable projective non-injective $\Lambda$-modules $P_i$ by Theorem $\ref{DomDim}$ (2).  Since $\tilde{Q}$ is injective and $(\bigoplus_i\Omega_{\Lambda}^{-1}P_i)\in\Gen(\tilde{Q})$ by the definition of $\mathcal{C}_{\Lambda}$, we must have $\Hom_{\Lambda}(T_{\mathcal{C}},\tau_{\Lambda}X)=0$.  Since $\tau_{\Lambda}X\in\mathcal{F}(T_{\mathcal{C}})$, we conclude $(\mathcal{T}(T_{\mathcal{C}}),\mathcal{F}(T_{\mathcal{C}}))$ is splitting by Proposition $\ref{split}$ (c).

\end{proof}
We are now ready for our main result.  We use freely homological results pertaining to algebras $\Lambda$ of $\mathop{\text{gl.dim}}\Lambda=2$.  See $\cite{ZITO}$ for further reference.
\begin{theorem}
\label{Main}
Let $A$ be an Auslander algebra of $\mathop{\emph{gl.dim}}A=2$.  Then $A$ is tilted if and only if $\pd_{A}(\tau_{A}\Omega_{A}DA)\leq1$.
\end{theorem}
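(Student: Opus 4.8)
The plan is to pass through Propositions \ref{prop1} and \ref{prop2} and reduce the statement to a splitting condition on the torsion pair $(\mathcal{T}(T_{\mathcal{C}}),\mathcal{F}(T_{\mathcal{C}}))$, after first identifying $\tau_{A}\Omega_{A}DA$ as the module that governs $\mathcal{F}(T_{\mathcal{C}})$. The decisive preliminary observation is that for an Auslander algebra the tilting and cotilting modules of Theorem \ref{DomDim} coincide: by Lemma \ref{CBS} there is a tilting--cotilting module $W\in\mathcal{C}_{A}$, and since $W$ is tilting it equals $T_{\mathcal{C}}\cong\tilde{Q}\oplus(\bigoplus_i\Omega_{A}^{-1}P_i)$ by Theorem \ref{DomDim}(2), while being cotilting it equals $C_{\mathcal{C}}\cong\tilde{Q}\oplus(\bigoplus_i\Omega_{A}I_i)$ by Theorem \ref{DomDim}(3); hence $T_{\mathcal{C}}\cong C_{\mathcal{C}}$. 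As $A$ is basic, $DA\cong\tilde{Q}\oplus(\bigoplus_i I_i)$, and since $\tilde{Q}$ is projective we get $\Omega_{A}DA\cong\bigoplus_i\Omega_{A}I_i$; because $\tau_{A}$ annihilates the projective--injective summand $\tilde{Q}$, this yields $\tau_{A}T_{\mathcal{C}}\cong\tau_{A}C_{\mathcal{C}}\cong\tau_{A}\Omega_{A}DA$. Combined with $\mathcal{F}(T_{\mathcal{C}})=\text{Cogen}(\tau_{A}T_{\mathcal{C}})$ (recalled in the Tilted Algebras subsection), I obtain $\mathcal{F}(T_{\mathcal{C}})=\text{Cogen}(\tau_{A}\Omega_{A}DA)$, the bridge between the two sides of the theorem.

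Next I would record two homological facts that hold because $\mathop{\text{gl.dim}}A=2$. First, $\id_{A}T_{\mathcal{C}}\leq1$: the summand $\tilde{Q}$ is injective, and each $\Omega_{A}^{-1}P_i$ sits in $0\to P_i\to I^0\to\Omega_{A}^{-1}P_i\to0$ with $\id_{A}P_i\leq2$, forcing $\id_{A}\Omega_{A}^{-1}P_i\leq1$; then for $X\in\mathcal{T}(T_{\mathcal{C}})=\Gen(T_{\mathcal{C}})$ a short exact sequence $0\to K\to T_{\mathcal{C}}^{\,d}\to X\to0$ with $\id_{A}K\leq2$ gives $\id_{A}X\leq1$, so the injective-dimension hypothesis of Proposition \ref{prop1} is automatic. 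Second, a submodule of a module of projective dimension $\leq1$ again has projective dimension $\leq1$ (again from $\mathop{\text{gl.dim}}A=2$, via the estimate $\pd_{A}X\leq\max(\pd_{A}Y,\pd_{A}(Y/X)-1)$); hence $\pd_{A}X\leq1$ for all $X\in\text{Cogen}(\tau_{A}\Omega_{A}DA)=\mathcal{F}(T_{\mathcal{C}})$ as soon as $\pd_{A}(\tau_{A}\Omega_{A}DA)\leq1$.

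With these in hand the backward implication is immediate: assuming $\pd_{A}(\tau_{A}\Omega_{A}DA)\leq1$, the second fact yields $\pd_{A}X\leq1$ for every $X\in\mathcal{F}(T_{\mathcal{C}})$, so Proposition \ref{prop2} makes $(\mathcal{T}(T_{\mathcal{C}}),\mathcal{F}(T_{\mathcal{C}}))$ splitting, and with the automatic injective bound Proposition \ref{prop1} shows $A$ is tilted. For the forward implication I would use that a tilted algebra is quasi-tilted, so every indecomposable $A$-module has $\pd_{A}\leq1$ or $\id_{A}\leq1$. It then suffices to show each indecomposable summand $M$ of $\tau_{A}\Omega_{A}DA$ has $\id_{A}M\geq2$, for the dichotomy then forces $\pd_{A}M\leq1$. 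Writing $M=\tau_{A}N$ with $N$ an indecomposable non-projective summand of some $\Omega_{A}I_i$, the $\mathop{\text{gl.dim}}A=2$ criterion $\id_{A}M\leq1\iff\Hom_{A}(\tau_{A}^{-1}M,A)=0$ gives $\id_{A}M\leq1\iff\Hom_{A}(N,A)=0$; but $N$ embeds in the projective cover of $I_i$, so $\Hom_{A}(N,A)\neq0$ and $\id_{A}M=2$, as required.

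The hard part, and the step I would guard most carefully, is the opening identification $\tau_{A}T_{\mathcal{C}}\cong\tau_{A}\Omega_{A}DA$: it is exactly what converts the intrinsic torsion-theoretic condition ``$\pd_{A}X\leq1$ on $\mathcal{F}(T_{\mathcal{C}})$'' into the single checkable invariant $\pd_{A}(\tau_{A}\Omega_{A}DA)$, and it rests on the Auslander-algebra-specific coincidence $T_{\mathcal{C}}\cong C_{\mathcal{C}}$ extracted from Lemma \ref{CBS} together with the uniqueness built into Theorem \ref{DomDim}. I would also verify the precise form of the two homological lemmas used, namely $\pd_{A}X\leq1\iff\Hom_{A}(DA,\tau_{A}X)=0$ (already invoked in Proposition \ref{prop2}) and its injective dual $\id_{A}X\leq1\iff\Hom_{A}(\tau_{A}^{-1}X,A)=0$, since the whole argument is calibrated to them.
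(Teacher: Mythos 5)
Your proposal is correct and follows essentially the same route as the paper: identify $\tau_{A}T_{\mathcal{C}}$ with $\tau_{A}\Omega_{A}DA$ via the tilting--cotilting coincidence from Lemma \ref{CBS} and Theorem \ref{DomDim}, verify the hypotheses of Propositions \ref{prop1} and \ref{prop2}, and use the quasi-tilted dichotomy together with $\id_{A}(\tau_{A}\Omega_{A}DA)=2$ for the forward direction. The only differences are cosmetic: where the paper invokes the Auslander--Reiten formula and the $\Hom_{A}(\tau_{A}^{-1}\Omega_{A}^{-1}A,-)$ criterion, you use direct long-exact-sequence arguments (closure of $\{\id_{A}\leq1\}$ under $\Gen$ and of $\{\pd_{A}\leq1\}$ under submodules when $\mathop{\text{gl.dim}}A=2$), which establish the same two facts.
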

\begin{proof}
If $A$ is tilted, then $\pd_{A}(\tau_{A}\Omega_{A}DA)\leq1$ since $\id_{A}(\tau_{A}\Omega_{A}DA)=2$ for every $X\in\add(\tau_{A}\Omega_{A}DA)$.  Next, assume $\pd_{A}(\tau_{A}\Omega_{A}DA)\leq1$.  Since $A$ is an Auslander algebra, $\mathcal{C}_{A}$ contains a tilting-cotilting module $T_{\mathcal{C}}$ by Lemma $\ref{CBS}$.  Let $(\mathcal{T}(T_{\mathcal{C}}),\mathcal{F}(T_{\mathcal{C}}))$ be the induced torsion pair and consider an indecomposable module $X\in\mathcal{T}(T_{\mathcal{C}})$.  Since $\pd_{A}T_{\mathcal{C}}\leq1$, the Auslander-Reiten formulas imply $\Hom_{A}(X,\tau_{A}T_{\mathcal{C}})=0$. Since $T_{\mathcal{C}}$ is cotilting, Theorem $\ref{DomDim}$ (3) says $T_{\mathcal{C}}\cong\tilde{Q}\oplus(\bigoplus_i\Omega_{A}I_i)$ where all $I_i$ are non-projective.  We have $\tau_{A}T_{\mathcal{C}}=\tau_{A}(\bigoplus_i\Omega_{A}I_i)$ since $\tau_{A}\tilde{Q}=0$.
Thus, $\Hom_{A}(X,\tau_{A}(\bigoplus_i\Omega_{A}I_i))=0$ implies $\id_{A}X\leq1$.  Since $X$ was arbitrary, we conclude $\id_{A}X\leq1$ for every $X\in\mathcal{T}(T_{\mathcal{C}})$.
\par
Next, consider an indecomposable module $Y\in\mathcal{F}(T_{\mathcal{C}})$.  We have the following equalities $\mathcal{F}(T_{\mathcal{C}})=\text{Cogen}(\tau_{A}T_{\mathcal{C}})=\text{Cogen}(\tau_{A}(\bigoplus_i\Omega_{A}I_i))$ where the last equality follows from Theorem $\ref{DomDim}$ (3).  If $\pd_{A}Y=2$, then $\Hom_{A}(\tau_{A}^{-1}\Omega_{A}^{-1}A,Y)\neq0$.  Since $Y\in\text{Cogen}(\tau_{A}(\bigoplus_i\Omega_{A}I_i))$, we must have $\Hom_{A}(\tau_{A}^{-1}\Omega_{A}^{-1}A,\tau_{A}(\bigoplus_i\Omega_{A}I_i))\neq0$.  This implies $\pd_{A}(\tau_{A}(\bigoplus_i\Omega_{A}I_i))=2$.  Since this contradicts our original assumption, we must have $\pd_{A}Y\leq1$.  Since $Y$ was arbitrary, we conclude $\pd_{A}Y\leq1$ for every $Y\in\mathcal {F}(T_{\mathcal{C}})$.
\par
Since $\pd_{A}Y\leq1$ for every $Y\in\mathcal{F}(T_{\mathcal{C}})$, we know $(\mathcal{T}(T_{\mathcal{C}}),\mathcal{F}(T_{\mathcal{C}}))$ is splitting by Proposition $\ref{prop2}$.  We have also shown $\id_{A}X\leq1$ for every $X\in\mathcal{T}(T_{\mathcal{C}})$.  Thus, Proposition $\ref{prop1}$ implies $A$ is a tilted algebra.

\end{proof}
Following $\cite{HRS}$, we say an algebra $\Lambda$ is $quasi$-$tilted$ if $\mathop{\text{gl.dim}}\Lambda\leq2$ and, for every $X\in\mathop{\text{ind}}\Lambda$, we have $\pd_{\Lambda}X\leq1$ or $\id_{\Lambda}X\leq1$. The following corollary shows if $\Lambda$ is an Auslander and quasi-tilted algebra, then $\Lambda$ must be tilted. 
\begin{cor}
Let $A$ be an Auslander algebra of $\mathop{\emph{gl.dim}}=2$.  Then $A$ is quasi-tilted if and only if $A$ is tilted.
\end{cor}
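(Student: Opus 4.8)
The plan is to deduce the corollary directly from Theorem~\ref{Main}, so that essentially no new homological work is required. For the implication that a tilted algebra is quasi-tilted I would simply appeal to the general theory: every tilted algebra has global dimension at most $2$, and its indecomposables split into a torsion class all of whose members have injective dimension at most $1$ together with a torsion-free class all of whose members have projective dimension at most $1$; hence every tilted algebra is quasi-tilted in the sense of \cite{HRS}. Under the standing hypothesis $\mathop{\text{gl.dim}}A=2$ this settles the forward direction at once, with no appeal to the Auslander condition.

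For the converse I would reuse the key input from the very first line of the proof of Theorem~\ref{Main}: for an Auslander algebra $A$ with $\mathop{\text{gl.dim}}A=2$, each nonzero indecomposable summand $X$ of $\tau_A\Omega_A DA$ satisfies $\id_A X = 2$. Granting this, suppose $A$ is quasi-tilted and let $X$ be such a summand. The defining dichotomy for quasi-tilted algebras forces $\pd_A X \le 1$ or $\id_A X \le 1$; since $\id_A X = 2$, the injective branch is impossible, so $\pd_A X \le 1$. As this holds for every indecomposable summand, $\pd_A(\tau_A\Omega_A DA) \le 1$, and Theorem~\ref{Main} then gives that $A$ is tilted.

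The substantive content is thus concentrated in the equality $\id_A X = 2$ for the summands $X$ of $\tau_A\Omega_A DA$, which I regard as the main obstacle, though it is one already resolved in the excerpt. I would justify it through the $\mathop{\text{gl.dim}}2$ cosyzygy formula dual to the one used for projective dimension in the proofs of Proposition~\ref{prop2} and Theorem~\ref{Main}: an indecomposable $X$ satisfies $\id_A X = 2$ precisely when $\Hom_A(X, \tau_A\Omega_A DA) \ne 0$, and for a direct summand $X$ of $\tau_A\Omega_A DA$ the split inclusion is a nonzero element of $\Hom_A(X,\tau_A\Omega_A DA)$, so no nonzero summand can drop to injective dimension $1$. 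Once this is recorded, the two implications above complete the equivalence; I would close by remarking that the converse is exactly the quasi-tilted-to-tilted half already implicit in the opening sentence of the proof of Theorem~\ref{Main}, now isolated and run on its own.
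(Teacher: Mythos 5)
Your proposal is correct and follows essentially the same route as the paper: the forward direction is the standard fact that tilted algebras are quasi-tilted, and the converse uses that every indecomposable summand $X$ of $\tau_A\Omega_A DA$ has $\id_A X=2$ (the same fact the paper invokes in the first line of the proof of Theorem~\ref{Main}), so the quasi-tilted dichotomy forces $\pd_A X\leq 1$ and Theorem~\ref{Main} applies. Your extra justification of $\id_A X=2$ via the global-dimension-two criterion is exactly the homological input the paper takes from \cite{ZITO}.
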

\begin{proof}
Assume $A$ is quasi-tilted.  Since $\id_{A}X=2$ for every $X\in\add(\tau_{A}(\bigoplus_i\Omega_{A}I_i))$, we must have $\pd_{A}X\leq1$ for every $X\in\add(\tau_{A}(\bigoplus_i\Omega_{A}I_i))$.  We conclude from Theorem $\ref{Main}$ that $A$ is tilted.  If $A$ is tilted, then $A$ is trivially quasi-tilted.
\end{proof}

\section{Examples}
In this section, we illustrate our main result with two small examples.
\begin{example}  
Consider the Auslander algebra $A$ given by the following quiver with relations
$$\begin{array}{cc}
\xymatrix{1\ar[r]^\za&2\ar[r]^\zb&3}
&\quad\za\zb=0.\end{array}$$
We have $\tau_A\Omega_ADA=S(3)$, where $S(3)$ denotes the simple module at vertex $3$.  Notice, $\pd_AS(3)=0$ and $A$ is a tilted algebra.

\end{example}
\begin{example}
Consider the Auslander algebra $A$ given by the following quiver with relations 
$$\begin{array}{cc}
\xymatrix{1\ar[r]^\za&2\ar[r]^\zb&3\ar[r]^\zg&4\ar[r]^\zd&5}
&\quad\za\zb=\zg\zd=0.\end{array}$$
We have $\tau_{A}\Omega_{A}DA=P(5)\oplus S(3)$, where $P(5)$ denotes the simple projective at vertex $5$ and $S(3)$ denotes the simple module at vertex $3$.  Notice, $\pd_{A}S(3)=2$ and $A$ is not tilted.

\end{example}

\noindent Mathematics Faculty, University of Connecticut-Waterbury, Waterbury, CT 06702, USA
\it{E-mail address}: \bf{stephen.zito@uconn.edu}

\end{document}